\theoremstyle{plain} 
\newtheorem{thm}{Theorem}[section]
\newtheorem{cor}[thm]{Corollary}
\newtheorem{lem}[thm]{Lemma}
\newtheorem{prop}[thm]{Proposition}
\newtheorem{defn}{Definition}[section]
\newtheorem{rem}{Remark}[section]
\newcommand{\head}[0]{\mathfrak{h}}
\newcommand{\tail}[0]{\mathfrak{t}}
\newcommand{\inv}[0]{^{-1} }
\newcommand{\arc}[0]{\mathfrak{A}}
\newcommand{\circtheta}[0]{\mbox{\rm circ}_\theta}
\newcommand{\re}[0]{\mbox{\rm Re }}
\newcommand{\im}[0]{\mbox{\rm Im }}
\title{A family of quantum walks on a finite graph corresponding to the generalized weighted zeta function}
\author{Ayaka Ishikawa \\
 Faculty of Engineering, Yokohama National University, \\
 Hodogaya, Yokohama 240-8501, Japan}
\date{}
\begin{document}

\maketitle

\begin{abstract}
	This paper gives the quantum walks determined by graph zeta functions.
	The result enables us to obtain the characteristic polynomial of the transition matrix of the quantum walk, and it determines the behavior of the quantum walk.
	We treat finite graphs allowing multi-edges and multi-loops.

\end{abstract}


\section{Introduction}


This paper considers the relationship between a graph zeta function and a quantum walk.
A quantum walk is a quantization of a random walk.
The same as random walks, mixing time, hitting time, etc., are studied.
In addition, there is characteristic behavior of quantum walk, including localization and periodicity.
It is determined by the spectrum of the transition matrix of a quantum walk \cite{Higuchi2007, konno2017, segawa2011}.
For the Grover walk \cite{grover1996fast}, Konno and Sato \cite{konnosato12} indicated the relation between the Grover walk and the Sato zeta function, and they gave the spectrum of the transition matrix by the Sato zeta function.
The Sato zeta function \cite{sato07} is a generalized graph zeta function of the Ihara zeta function \cite{ihara66}.

The essential point of the theorem lies in the fact that the ``edge matrix'' $M_{\rm S}$ of the Sato zeta function is a generalization of the transition matrix of the Grover walk $U_{\rm Gr}$.
We call the theorem Konno-Sato's theorem.
If we impose certain conditions into the Sato zeta function, then $^t\!M_{\rm S} = U_{\rm Gr}$ holds.
The Sato zeta function is given by the inverse of the reciprocal characteristic polynomial of the edge matrix, called the Hashimoto expression.
Thus, we can obtain the characteristic polynomial of $U_{\rm Gr}$ by the inverse of the reciprocal Sato zeta function.

The problem treated in this paper is to identify the conditions under which $^t\!M_{\rm S}=U$ for the transition matrix $U$ of a quantum walk.
Our theorem shows the condition for the existence of a quantum walk with $^t\!M_{\rm S}$ as the transition matrix.
The result gives the family of quantum walks whose behavior is identified by the Sato zeta function.

Identifying the relationship between the other graph zeta function and quantum walks is also a problem.
We consider the same problem for the generalized weighted zeta function \cite{IIMSS21, morita20}.
The zeta function is a generalization of the graph zeta functions, including the Sato zeta function.
Thus, the family identified by the generalized weighted zeta function contains the Grover walk.

Throughout this paper, we use the following symbols.
Let the set of the positive integers by $\mathbb{Z}_>$.
The spectrum of a matrix $M$ is ${\rm Spec}(M)$.
For a vector $\Psi$, $||\Psi||$ is the $L^2$-norm of $\Psi$.

\section{Preliminary}\label{sec:pre}

\subsection{Graphs}

Let $G=(V, E)$ be a graph, and the edge set $E$ be a multiset.
The graph is {\it finite} if both $V$ and $E$ are finite.
For a vertex $v\in V$, let $\deg(v) := \#\{ \{v,w\} \in E | w\in V \}$.
It is called the {\it degree} of $v$.
If there is at most one edge between every two vertices,
then the graph is called {\it simple}.
Let $\arc$ be a multiset of ordered pairs of vertices,
and the element is called {\it arcs}.
A {\it digraph} $\Delta$ is a pair $(V,\arc)$.
The digraph is {\it finite} if both $V$ and $\arc$ are finite.
For an arc $a=(v,w)$, 
$v$ and $w$ are called the {\it tail} and {\it head} of $a$.
We denote by $\tail(a):=v$ and $\head(a):=w$.
For two vertices $v,w\in V$,
we define
\begin{align*}
	&\arc_{vw} := \{ a\in\arc \ | \ \tail(a)=v, \head(a)=w \}, \\
	&\arc_{v*} := \{ a\in\arc | \tail(a)=v\}, \\
	&\arc_{*w} := \{ a\in\arc \ | \ \head(a)=w\}, \\
	&\arc(v,w) := \arc_{v,w}\cup \arc_{wv}.
\end{align*}
For a graph $G=(V,E)$,
let $\arc(G):=\{ a_e(v,w), \overline{a}_e=(w,v) | e=\{ w,v \}\in E \}$.
The digraph $\Delta(G)=(V,\arc(G))$ is called the {\it symmetric digraph} for $G$.
Note that $\deg(v)=|\arc_{v*}|$ holds for $v\in V$.

For a digraph $\Delta=(V,\arc)$,
a sequence of arcs $p=(a_i)_{i=1}^{k}$ is a {\it path} if it satisfies $\head(a_i) = \tail(a_{i+1})$ for each $i=1,2,\ldots, k-1$.
The number $k$ is called the {\it length} of $p$.
A {\it backtracking} is a path $(a,a')$ satisfying $\head(a')=\tail(a)$. 
The path $p$ is {\it closed} if $\head(a_k)=\tail(a_1)$ holds.
Let $X_k$ be the set of closed paths with length $k$ on $\Delta$.
For $p\in X_k$ and $n\in\\mathbb{Z}_>$, $p^n$ denotes a closed path with length $kn$ obtained by joining $n$ paths $p$.
If $p$ does not have a backtracking and closed path $q$ s.t. $p=q^m$, then it is called a {\it prime} closed path.

Let $\sigma$ be a map onto $X_k$ s.t. for $p=(a_i)_{i=1}^{k}$, $\sigma(p)=(a_2,a_3,\ldots ,a_k,a_1)$.

For $p=(a_i)_{i=1}^k, p'=(a_i')_{i=1}^k \in X_k$,
we define a relation $\sim$ between $p$ and $p'$ if there exists a positive integer $n$ s.t. $\sigma^n(p)=p'$.
The relation is an equivalence relation.
The quotient set $X_k/\sim$ is denoted by $\mathcal{P}_k$.
We call an equivalence class in $\mathcal{P}_k$ a {\it cycle}, and $[p]$ denotes the cycle including $p$.

\subsection{Quantum walks on graphs}
Let $G=(V,E)$ be a finite simple graph, and $\Delta(G)=(V,\arc(G))$ the symmetric digraph for $G$.
Let a $\mathcal{H}$$\mathbb{C}$-liner space $\mathcal{H}$ defined as follows:
$$
	\mathcal{H}:=\{ \Psi : \arc(G) \to \mathbb{C} \ | \ ||\Psi(a)||^2 < \infty \},
$$
and we assume an inner product on $\mathcal{H}$ as the Euclidian inner product.
Then, $\mathcal{H}$ is the Hilbert space.
Let $\delta_a$ for $a\in\arc(G)$ be a function satisfying
$$
	\delta_a(a')=\begin{cases}
			1 & \mbox{if} \ a=a', \\
			0 & \mbox{otherwise},
		\end{cases}
$$
and we regard the set $\{ \delta_a | a\in\arc(G) \}$ as a standard basis on $\mathcal{H}$.
We assume that a function $w : \arc(G)\to \mathbb{C}$ satisfies 
$$
	\sum_{a\in\arc_{*v}} |w(a)|^2 =1.
$$
The {\it coin matrix} $C$ is the following unitary matrix:
$$
	(C\Psi)(a)=\sum_{a'\in\arc_{*\head(a)}} w(a') \Psi(a').
$$
The unitary matrix $U:=SC$ is called the {\it transition matrix}.
The {\it quantum walk} is a process defined by a transition matrix and an initial state on $\mathcal{H}$.
For an initial state $\Psi_0\in\mathcal{H}$ with $||\Psi_0||^2=1$, the {\it state at time $n$} $\Psi_n$ is $U^n\Psi_0$.
The probability of observing on $v\in V$ at time $n$ is given by $\sum_{a\in\arc_{*v}} ||(U^n\Psi)(a)||^2$.

Let $C_{\rm Gr}$ be the following coin matrix:
$$
	(C_{\rm Gr}\Psi)(a)=\sum_{a'\in\arc_{*\head(a)}} \left( \frac{2}{\deg(\head(a))}-\delta_a(a') \right) \Psi(a')
$$
for $\Psi\in\mathcal{H}$.
The transition matrix $U_{\rm Gr}:=SC_{\rm Gr}$ is called the {\it Grover transition matrix},
and the quantum walk decided by $U_{\rm Gr}$ is called {\it Grover walk} on $G$ \cite{grover1996fast}.
Note that $(a,a')$-element of $U_{\rm Gr}$ is given as follows:
$$
	\frac{2}{\deg(\tail(a))}\delta_{\head(a')\tail(a)} - \delta_{a'\overline{a}}.
$$

\begin{rem}
	For instance, the spectrum of $U$ is convenient for knowing the periodicity of a quantum walk.
	A transition matrix $U$ is {\it periodic} if there exists $k\in \mathbb{Z}_>$ satisfying $U^k = I$.
	The minimum value of such $k$ is the {\it period} of $U$.
	Note that if $U$ is periodic, then $U^{nk}\Psi_0=\Psi_0$ holds for $\forall n\in\mathbb{Z}_>$ and any initial state $\Psi_0$.
	It is known that if $\forall \mu\in{\rm Spec}(U)$ is a primitive root of $n(\mu)\in \mathbb{Z}_>$, then the period equals ${\rm LCM}( n(\mu) )_{\mu\in{\rm Spec}(U)}$.
	Thus, the spectrum of $U$ allows the periodicity to be determined without simulation.
\end{rem}

Let $T:=(T_{uv})_{u,v\in V}$ be a matrix defined as follows:
	$$
		T_{uv} =\begin{cases}
			\frac{1}{\deg(u)} & \mbox{ if } \  \{ u,v \}\in E,\\
			0 & \mbox{ otherwise}.
		\end{cases}
	$$
Konno-Sato's theorem \cite{konnosato12} gives the characteristic polynomial of $U_{\rm Gr}$ by the eigenvalues of $T$.
\begin{thm}\label{thm:Grover}
	Let $G=(V,E)$ be a finite simple connected graph with $n$ vertices and $m$ edges.
	The characteristic polynomial of $U_{\rm Gr}$ is given as follows: 
	\begin{align}
		\det(\lambda I- U_{\rm Gr}) &= (\lambda^2-1)^{m-n} \det((\lambda^2+1)I-2\lambda T) \label{eq:Grover}\\
		&= (\lambda^2-1)^{m-n} \prod_{\mu \in {\rm Spec}(T)} ((\lambda^2+1)-2\mu\lambda).\nonumber
	\end{align}
	From the above, we have
	$$
	{\rm Spec}(U_{\rm Gr}) = \{ -1, 1\}^{m-n} \sqcup \{ \lambda \ | \ \lambda^2-2\mu\lambda +1 =0, \ \mu\in {\rm Spec}(T) \}.
$$
\end{thm}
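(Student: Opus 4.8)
The plan is to express the Grover coin as an affine function of an orthogonal projection, collapse the $2m$-dimensional determinant to an $n$-dimensional one by a rank-$n$ perturbation argument, and identify the resulting vertex matrix with $T$ up to similarity.

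First I would observe that $C_{\rm Gr}=2\Pi-I$, where $\Pi$ is the orthogonal projection onto the span of the vectors $f_v\in\mathcal H$ ($v\in V$) given by $f_v(a)=\deg(v)^{-1/2}$ if $\head(a)=v$ and $0$ otherwise. These are orthonormal, so writing $D$ for the $2m\times n$ matrix whose columns are the $f_v$ we get $D^{*}D=I_n$ and $\Pi=DD^{*}$. Hence $U_{\rm Gr}=S(2DD^{*}-I)=2SDD^{*}-S$ and
\[
  \lambda I-U_{\rm Gr}=(\lambda I+S)-2(SD)D^{*}.
\]

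Next I would apply Sylvester's determinant identity $\det(A+UV)=\det(A)\det(I_n+VA\inv U)$ with $A=\lambda I+S$, $U=-2SD$ and $V=D^{*}$, which is valid whenever $\lambda^2\neq1$; since both sides of the desired formula are polynomials in $\lambda$, this restriction is harmless. Two ingredients are needed. Because $S$ is the arc-reversal involution and $G$ is simple (so no arc equals its own reverse), $S^{2}=I$ with eigenvalues $\pm1$ of multiplicity $m$ each; thus $\det(\lambda I+S)=(\lambda^{2}-1)^{m}$, and from $(\lambda I+S)(\lambda I-S)=(\lambda^{2}-1)I$ we get $A\inv S=(\lambda^{2}-1)\inv(\lambda S-I)$.

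The computational heart, which I expect to be the main obstacle, is the identity $D^{*}SD=\widehat T$, where $\widehat T_{vw}=\deg(v)^{-1/2}\deg(w)^{-1/2}$ if $\{v,w\}\in E$ and $0$ otherwise. Evaluating the $(v,w)$ entry, using $S\delta_{a'}=\delta_{\overline{a'}}$ together with $\head(\overline{a'})=\tail(a')$, reduces it to $\deg(v)^{-1/2}\deg(w)^{-1/2}\,|\arc_{vw}|$, and simplicity of $G$ forces $|\arc_{vw}|\in\{0,1\}$. Care is required here in tracking the head/tail bookkeeping and the $\deg^{-1/2}$ normalizations. Since $\widehat T=\Lambda^{1/2}T\Lambda^{-1/2}$ with $\Lambda=\mathrm{diag}(\deg v)$, the matrices $\widehat T$ and $T$ are similar and share a characteristic polynomial.

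Finally I would assemble the pieces. From $VA\inv U=-2D^{*}A\inv SD=-2(\lambda^{2}-1)\inv(\lambda\widehat T-I_n)$ one computes
\[
  I_n+VA\inv U=(\lambda^{2}-1)\inv\bigl((\lambda^{2}+1)I_n-2\lambda\widehat T\bigr),
\]
so that $\det(\lambda I-U_{\rm Gr})=(\lambda^{2}-1)^{m}\,(\lambda^{2}-1)^{-n}\det((\lambda^{2}+1)I-2\lambda\widehat T)$, which equals $(\lambda^{2}-1)^{m-n}\det((\lambda^{2}+1)I-2\lambda T)$ by similarity. This is the claimed identity \eqref{eq:Grover}; factoring $\det((\lambda^{2}+1)I-2\lambda T)=\prod_{\mu\in{\rm Spec}(T)}(\lambda^{2}-2\mu\lambda+1)$ over the eigenvalues of $T$ yields the product form, and reading off the roots gives the stated spectrum, with $\pm1$ each of multiplicity $m-n$ and the pairs solving $\lambda^{2}-2\mu\lambda+1=0$.
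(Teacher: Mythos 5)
Your proof is correct, but it takes a genuinely different route from the paper. The paper does not compute the determinant directly: it identifies $U_{\rm Gr}$ with the transpose of the edge matrix $M_{\rm S}$ of the Sato zeta function (taking $\tau(a)=\tfrac{2}{\deg(\tail(a))}$, $\upsilon\equiv 1$), so that the Hashimoto expression $H_{\Delta(G)}(t;\theta^{\rm S})=\det(I-tM_{\rm S})\inv$ encodes the reciprocal characteristic polynomial of $U_{\rm Gr}$, and then invokes the Hashimoto-to-Ihara transformation (Proposition \ref{prop:ihara}, cited from the zeta-function literature) with weighted adjacency matrix $2T$ and weighted degree matrix $2I$ to obtain \eqref{eq:Grover}. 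In other words, the determinant collapse from size $2m$ to size $n$ is outsourced to a general theorem about graph zeta functions, which is the conceptual point of the paper. You instead prove that collapse by hand: writing $C_{\rm Gr}=2DD^{*}-I$ as a reflection about the isometry $D$, applying Sylvester's identity to $(\lambda I+S)-2(SD)D^{*}$, and establishing the key lemma $D^{*}SD=\widehat T=\Lambda^{1/2}T\Lambda^{-1/2}$; all your intermediate computations ($\det(\lambda I+S)=(\lambda^2-1)^m$ for a simple graph, $(\lambda I+S)\inv S=(\lambda^2-1)\inv(\lambda S-I)$, and the entrywise evaluation of $D^{*}SD$ using simplicity to force $|\arc_{vw}|\le 1$) check out, and the extension from $\lambda^2\neq 1$ to all $\lambda$ by polynomiality is sound. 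What your argument buys is self-containedness and elementarity — it is essentially the spectral-mapping/discriminant-operator proof standard in the quantum walk literature, and it re-derives the Bass-type determinant identity in this special case without any zeta-function machinery. What the paper's route buys is generality and context: the same Ihara-expression mechanism applies uniformly to the whole family of weights $(\tau,\upsilon)$ treated in Section \ref{main}, which is exactly why the paper frames Theorem \ref{thm:Grover} as a corollary of zeta-function identities rather than as a standalone linear-algebra fact.
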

By the transformation from the Hashimoto expression to the Ihara expression of the Sato zeta function, (\ref{eq:Grover}) is given.
The following section will mention a graph zeta function and its expressions.


\subsection{Graph zeta function}
Let $\Delta=(V,\arc)$ be a finite digraph.
We define a map $\theta : \arc\times \arc \to \mathbb{C}$.
Let $M_\theta:=(\theta(a,a'))_{a,a'\in\arc}$.
For a closed path $C=(c_i)_{i=1}^k\in X_k$, 
let ${\rm circ}_\theta (C)$ denote the circular product 
$
	\theta(c_1,c_2) \theta(c_2,c_3) \ldots \theta(c_k,c_1).
$
Note that ${\rm circ}_\theta(C)={\rm circ}_\theta(C')$ holds if $C\sim C'$.
Let $N_k({\rm circ}_\theta) := \sum_{C\in X_k} {\rm circ}_\theta(C)$.

\begin{defn}
	A graph zeta function for $\Delta$ is the following formal power series:
	\begin{align*}
		Z_\Delta(t;\theta) := \exp \left(\sum_{k\ge 1}\frac{N_k({\rm circ}_\theta)}{k}t^k \right).
	\end{align*}
\end{defn}
The map $\theta$ is called the {\it weight} of $Z_\Delta(t;\theta)$, and that expression is called the {\it exponential expression} \cite{morita20}.
Let
\begin{align*}
	E_\Delta (t;\theta) := \prod_{[C]\in\mathcal{P}} \frac{1}{1-{\rm circ}_\theta (C)t^{|C|}}, \qquad 
	H_\Delta (t;\theta) := \frac{1}{\det(I-tM_\theta)}.
\end{align*}
The expressions are called the {\it Euler expression} and {\it Hashimoto expression}, respectively (cf. \cite{morita20}).
\begin{prop}
	If $\theta : \arc\times\arc \to \mathbb{C}$ satisfies the condition 
	$$
		\theta(a,a') \neq 0 \Rightarrow \head(a)=\tail(a'),
	$$
	then $Z_\Delta(t;\theta)= E_\Delta (t;\theta)=H_\Delta (t;\theta)$ holds.
\end{prop}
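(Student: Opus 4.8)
The plan is to prove the two equalities $H_\Delta = Z_\Delta$ and $Z_\Delta = E_\Delta$ separately, treating all three objects as formal power series in $t$. This is legitimate since $\Delta$ is finite, so each $X_k$ is finite, every $N_k(\circtheta)$ is a finite sum, and only finitely many factors of the product contribute to each fixed power of $t$ (a factor indexed by $[D]$ affects $t^k$ only when $|D|\le k$). I expect the hypothesis $\theta(a,a')\neq 0 \Rightarrow \head(a)=\tail(a')$ to be needed only for the equality with the Hashimoto expression $H_\Delta$; the identity $Z_\Delta = E_\Delta$ is a purely combinatorial statement about circular products along closed paths.

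For $H_\Delta = Z_\Delta$, first I would expand $\tr(M_\theta^k)=\sum_{a_1,\dots,a_k}\theta(a_1,a_2)\cdots\theta(a_k,a_1)$, the sum ranging over all $k$-tuples of arcs. The hypothesis forces each factor $\theta(a_i,a_{i+1})$ (indices cyclic) to vanish unless $\head(a_i)=\tail(a_{i+1})$, so the only surviving tuples are exactly the closed paths in $X_k$, each contributing its circular product. Hence $\tr(M_\theta^k)=N_k(\circtheta)$. Then I invoke the standard identity $\det(I-tM_\theta)^{-1}=\exp\left(\sum_{k\ge1}\tfrac{t^k}{k}\tr(M_\theta^k)\right)$ (equivalently $\log\det=\tr\log$), which immediately gives $H_\Delta(t;\theta)=\exp\left(\sum_{k\ge1}\tfrac{N_k(\circtheta)}{k}t^k\right)=Z_\Delta(t;\theta)$.

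For $Z_\Delta = E_\Delta$, I would take the logarithm of the Euler product and reorganize the power sum. The key combinatorial facts are: (i) every closed path $C\in X_k$ has a unique primitive root, i.e. $C=D^m$ for a unique proper-power-free closed path $D$ and unique $m\ge1$, with $|D|=k/m$; (ii) the circular product is rotation-invariant (as already noted in the excerpt) and satisfies $\circtheta(D^m)=\circtheta(D)^m$; and (iii) the $\sigma$-orbit of $C=D^m$ has size exactly $|D|$. Grouping $N_k(\circtheta)=\sum_{C\in X_k}\circtheta(C)$ first by cycle and then by primitive root gives $N_k(\circtheta)=\sum_{m|D|=k}|D|\,\circtheta(D)^m$, the sum over primitive cycles $[D]$ and integers $m\ge1$ with $m|D|=k$. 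Substituting into $\sum_{k\ge1}\tfrac{N_k}{k}t^k$, the factor $|D|/k=1/m$ appears and the sum factors as $\sum_{[D]}\sum_{m\ge1}\tfrac1m\circtheta(D)^m t^{m|D|}=\sum_{[D]}-\log\!\left(1-\circtheta(D)t^{|D|}\right)$, whose exponential is precisely $E_\Delta(t;\theta)$.

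The main obstacle is the bookkeeping in this second equality: one must establish the unique factorization of a closed path into a power of its primitive root, verify that the $\sigma$-orbit size of $D^m$ equals $|D|$ (so that passing from the sum over closed paths to the sum over cycles introduces exactly the multiplicity that cancels the $1/k$), and confirm $\circtheta(D^m)=\circtheta(D)^m$. The orbit-size claim is the delicate point: it reduces to showing that $D^m$ has minimal cyclic period $|D|$, which follows from the primitivity of $D$. Once these multiplicity computations are pinned down, matching the coefficient of each $t^k$ on both sides is routine.
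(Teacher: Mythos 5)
Your argument is correct and complete in outline: under the adjacency condition the trace expansion gives $\mathrm{tr}(M_\theta^k)=N_k(\mathrm{circ}_\theta)$, so the identity $\det(I-tM_\theta)^{-1}=\exp\bigl(\sum_{k\ge1}\tfrac{t^k}{k}\mathrm{tr}(M_\theta^k)\bigr)$ yields $H_\Delta=Z_\Delta$, while the unique primitive-power factorization, the orbit-size count $|\sigma\text{-orbit of }D^m|=|D|$, and the multiplicativity $\mathrm{circ}_\theta(D^m)=\mathrm{circ}_\theta(D)^m$ yield $Z_\Delta=E_\Delta$ exactly as you describe. The paper gives no proof of its own (its proof is literally ``See \cite{morita20}''), and your two-step argument is the standard one that citation supplies; the only point worth flagging is that, as you implicitly and correctly do, the Euler product must be read as running over primitive (proper-power-free) cycles rather than over the paper's ``prime'' classes, whose definition also demands backtracking-freeness---a condition that cannot be imposed here, since for a general $\theta$ closed paths with backtracking carry nonzero weight.
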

\begin{proof}
	See \cite{morita20}.
\end{proof}
The above condition for $\theta$ is called the {\it adjacency condition}.

Let $G=(V,E)$ be a finite graph allowing multi-edges and multi-loops, and $\Delta(G)=(V,\arc(G))$ the symmetric digraph for $G$.
For maps $\tau, \upsilon : \arc \to \mathbb{C}$,
we define $\theta:=\theta^{\rm GW}$ by
$$
	\theta^{\rm GW} (a,a') := \tau(a')\delta_{\head(a)\tail(a')} - \upsilon(a')\delta_{\overline{a} a'}.
$$
Then $Z_{\Delta(G)}(t;\theta)$ is called the {\it generalized weighted zeta function}.

\begin{rem}\label{rem:weight}
{\rm
	For example, the generalized weighted zeta function includes the following graph zeta functions:
	\begin{itemize}
	\item Ihara zeta function $(\theta^{\rm I} := \theta|_{\tau(a)=\upsilon(a)=1})$ \cite{ihara66},
	\item Bartholdi zeta function $(\theta^{\rm B} := \theta|_{\upsilon(a)=(q-1)\tau(a)})$  \cite{bartholdi99},
	\item Mizuno-Sato zeta function $(\theta^{\rm MS} := \theta|_{\tau(a)=\upsilon(a)})$ \cite{mizuno2004weighted},
	\item Sato zeta function $(\theta^{\rm S} :=\theta|_{\upsilon(a) = 1})$ \cite{sato07}. 
	\end{itemize}}
\end{rem}

If $\theta(a,a')\neq 0$ for $a,a'\in\arc(G)$, then $\delta_{\head(a) \tail(a')} =1$ holds at least.
Thus, the weight satisfies the adjacency condition, and we see $Z_{\Delta(G)}(t;\theta)= E_{\Delta(G)} (t;\theta)=H_{\Delta(G)} (t;\theta)$.

Let $A_\theta:=(A_{uv})_{u,v\in V}, D_\theta:=(D_{uv})_{u,v\in V}$ be defined by
	\begin{align*}
		A_{uv} := \sum_{a\in \arc_{uv}} \frac{\tau(a)}{1-t^2\upsilon(a)\upsilon(\overline{a})}, \quad
		D_{uv} := \delta_{uv}\sum_{a\in \arc_{u*}} \frac{\tau(a)\upsilon(\overline{a})}{1-t^2\upsilon(a_e)\upsilon(\overline{a_e})}.
	\end{align*}
	We call these matrices the {\it weighted adjacency matrix} and {\it weighted degree matrix}, respectively.
	Note that $D_\theta$ is a diagonal matrix.
\begin{prop}\label{prop:ihara}
	Let $\arc(G)=\{ a_e,\overline{a_e} \ | \ e\in E \}$.
	The generalized weighted zeta function $Z_{\Delta(G)}(t;\theta^{\rm GW})$ is given by
	\begin{align*}
		\prod_{e\in E} (1-t^2\upsilon(a_e)\upsilon(\overline{a_e})) \det(I-tA_\theta +t^2D_\theta).
	\end{align*}
\end{prop}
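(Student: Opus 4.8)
The plan is to reduce the $2m\times 2m$ determinant $\det(I-tM_\theta)$ to an $n\times n$ determinant over the vertices by a Bass-type linearization. Since $\theta^{\rm GW}$ satisfies the adjacency condition, the Hashimoto expression gives $Z_{\Delta(G)}(t;\theta^{\rm GW})=\det(I-tM_\theta)^{-1}$, so it suffices to evaluate $\det(I-tM_\theta)$ and show it equals the displayed product (the latter is thus the reciprocal, i.e.\ the Ihara-type expression). Writing $m=|E|$, $n=|V|$, I would first record a matrix factorization of the weight: define the $n\times 2m$ tail- and head-incidence matrices $S$ and $T$ by $S_{v a}=\delta_{v\tail(a)}$ and $T_{v a}=\delta_{v\head(a)}$, let $J$ be the $2m\times 2m$ permutation matrix of the involution $a\mapsto\overline a$ (so $J_{a a'}=\delta_{\overline a a'}$), and let $\mathrm{diag}(\tau)$, $\mathrm{diag}(\upsilon)$ be the diagonal arc-weight matrices. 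Checking entries then gives
$$M_\theta = T^{\top}S\,\mathrm{diag}(\tau) - J\,\mathrm{diag}(\upsilon),$$
the first term being the adjacency part $\tau(a')\delta_{\head(a)\tail(a')}$ and the second the backtracking part $\upsilon(a')\delta_{\overline a a'}$.

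Next I would isolate the backtracking part. Set $P:=I+tJ\,\mathrm{diag}(\upsilon)$, so that $I-tM_\theta=P-t\,T^{\top}S\,\mathrm{diag}(\tau)=P\bigl(I-t\,P^{-1}T^{\top}S\,\mathrm{diag}(\tau)\bigr)$ and hence $\det(I-tM_\theta)=\det(P)\det\bigl(I-t\,P^{-1}T^{\top}S\,\mathrm{diag}(\tau)\bigr)$. Because $J$ pairs each $a_e$ with $\overline{a_e}$, the matrix $P$ is block diagonal in the pairs $\{a_e,\overline{a_e}\}$, with $2\times2$ block $\bigl(\begin{smallmatrix}1 & t\upsilon(\overline{a_e})\\ t\upsilon(a_e) & 1\end{smallmatrix}\bigr)$, whose determinant is $1-t^2\upsilon(a_e)\upsilon(\overline{a_e})$; therefore $\det(P)=\prod_{e\in E}(1-t^2\upsilon(a_e)\upsilon(\overline{a_e}))$, which is exactly the prefactor in the statement, and the same block structure yields $P^{-1}$ explicitly. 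Applying the Sylvester identity $\det(I_{2m}-XY)=\det(I_n-YX)$ with $X=tP^{-1}T^{\top}$ and $Y=S\,\mathrm{diag}(\tau)$ then reduces the remaining factor to $\det\bigl(I_n-t\,S\,\mathrm{diag}(\tau)P^{-1}T^{\top}\bigr)$.

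It remains to identify the $n\times n$ matrix $S\,\mathrm{diag}(\tau)P^{-1}T^{\top}$ with $A_\theta-tD_\theta$, which is the step I expect to demand the most care. Its $(u,v)$-entry is $\sum \tau(a)(P^{-1})_{a a'}$ over all $a$ with $\tail(a)=u$ and $a'$ with $\head(a')=v$, and since $P^{-1}$ is supported on the pairs $\{a_e,\overline{a_e}\}$ only two cases arise. The diagonal entries $(P^{-1})_{a a}=1/(1-t^2\upsilon(a)\upsilon(\overline a))$ force $\head(a)=v$ and collect to $\sum_{a\in\arc_{uv}}\tau(a)/(1-t^2\upsilon(a)\upsilon(\overline a))=(A_\theta)_{uv}$, while the off-diagonal entries $(P^{-1})_{a\overline a}=-t\upsilon(\overline a)/(1-t^2\upsilon(a)\upsilon(\overline a))$ force $\head(\overline a)=\tail(a)=v$, hence contribute only when $u=v$ and collect to $-t\sum_{a\in\arc_{u*}}\tau(a)\upsilon(\overline a)/(1-t^2\upsilon(a)\upsilon(\overline a))=-t(D_\theta)_{uu}$. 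The delicate bookkeeping is tracking which incidence constraints survive and correctly handling loops and multi-edges, where $a_e$ and $\overline{a_e}$ may share endpoints. Combining the two cases gives $S\,\mathrm{diag}(\tau)P^{-1}T^{\top}=A_\theta-tD_\theta$, so $\det\bigl(I_n-t\,S\,\mathrm{diag}(\tau)P^{-1}T^{\top}\bigr)=\det(I-tA_\theta+t^2D_\theta)$ and therefore $\det(I-tM_\theta)=\prod_{e\in E}(1-t^2\upsilon(a_e)\upsilon(\overline{a_e}))\det(I-tA_\theta+t^2D_\theta)$, the asserted expression. As a sanity check, the specialization $\tau=\upsilon=1$ should recover the classical Ihara formula $(1-t^2)^{m-n}\det(I-tA+t^2(D-I))$.
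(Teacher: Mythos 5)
The paper does not actually prove Proposition~\ref{prop:ihara}: it is stated without proof and implicitly deferred to the cited references \cite{IIMSS21, morita20}, so there is no internal argument to compare yours against. Judged on its own, your proof is correct and self-contained, and it follows the Bass-type linearization that is the standard route to Ihara-type expressions. The factorization $M_\theta = T^{\top}S\,\mathrm{diag}(\tau) - J\,\mathrm{diag}(\upsilon)$ checks out entrywise; $P = I + tJ\,\mathrm{diag}(\upsilon)$ is indeed block diagonal over the pairs $\{a_e,\overline{a_e}\}$ --- also for loops, since $a_e$ and $\overline{a_e}$ are distinct elements of the arc multiset --- giving $\det P = \prod_{e\in E}\bigl(1-t^2\upsilon(a_e)\upsilon(\overline{a_e})\bigr)$; Sylvester's identity applies with your choice of $X$ and $Y$; and the identification $S\,\mathrm{diag}(\tau)P^{-1}T^{\top} = A_\theta - tD_\theta$ is exactly right: the diagonal entries of $P^{-1}$ assemble $A_\theta$, while the off-diagonal entries force $\head(\overline{a})=\tail(a)$ and hence contribute only when $u=v$, which is precisely why $D_\theta$ is diagonal. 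Two remarks, neither a gap. First, you correctly read the proposition as asserting $Z_{\Delta(G)}(t;\theta^{\rm GW})^{-1} = \det(I-tM_\theta) = \prod_{e\in E}(1-t^2\upsilon(a_e)\upsilon(\overline{a_e}))\det(I-tA_\theta+t^2D_\theta)$; as literally printed the statement omits the inverse, and your reading is the one consistent with how the paper uses the proposition (e.g.\ in deriving Theorem~\ref{thm:Grover}) and with its commented worked examples. Second, you should say where $P^{-1}$ lives: $P$ is invertible over $\mathbb{C}(t)$ (or $\mathbb{C}[[t]]$) because $\det P$ is a polynomial equal to $1$ at $t=0$, and since both sides of the final identity are polynomials in $t$, proving it over $\mathbb{C}(t)$ suffices. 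A small bonus of your argument: it works verbatim for multi-edges and multi-loops, i.e.\ in the generality this paper actually needs, whereas the cited Ihara-expression result \cite{IIMSS21} is stated for finite simple graphs.
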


Let $G=(V, E)$ be a finite connected graph.
Theorem \ref{thm:Grover} follows from the Ihara expression of the Sato zeta function $Z_{\Delta(G)}(t;\theta^{\rm S})$.
For the weight $\theta^{\rm S}=\theta|_{\upsilon(a) = 1}$ in Remark \ref{rem:weight},
let $\tau(a)=\frac{2}{\deg(\tail(a))}$ for $\forall a\in\arc(G)$.
An $(a,a')$-element of the edge matrix $M_{\rm S}$ of the Sato zeta function is as follows:
$$
	\frac{2}{\deg(\tail(a'))}\delta_{\head(a)\tail(a')}-\delta_{\overline{a}a'}.
$$
We obtain $U_{\rm Gr}= ^t\!M_{\rm S}$, and $H_{\Delta(G)}(t;\theta^{\rm S})$ gives the characteristic polynomial $\det(I-tU_{\rm Gr})$.

Since the weighted adjacency matrix and weighted degree matrix equal $2T$ and $2I$, respectively,
we get Theorem \ref{thm:Grover}.

\section{Main result}\label{main}

\subsection{The quantum walks following from the Sato zeta function}

Let $G=(V, E)$ be a finite graph allowing multi-edges and multi-loops.
We will show the condition for the existence of a quantum walk that has the transition matrix $^t\!M_{\rm S}$.
If $^t\!M_{\rm S}$ is a transition matrix, then the sift matrix and coin matrix are given by $S$ and $S\inv {^t\!M}_{\rm S}$, respectively.
Since a coin matrix is just a unitary matrix, we only need to obtain the condition that $S\inv {^t\!M}_{\rm S}$ is unitary.
The unitary conditions for $S\inv {^t\!M}_{\rm S}$, ${^t\!M}_{\rm S}$ are equivalent since $S$ is unitary.
Thus, we show the unitary condition for $M_{\rm S}$.

\begin{thm}\label{thm:sato}
	The edge matrix $M_{\rm S}=(\theta^{\rm S}(a,a'))_{a,a'\in\arc(G)}$ of the Sato zeta function is unitary if and only if the map $\tau$ satisfies
	$\tau(a)=\tau(a')$ for $\forall u\in V$ and $\forall a\in\arc_{a,a'\in\arc_{u*}}$.
\end{thm}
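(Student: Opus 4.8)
The plan is to check unitarity of $M_{\rm S}$ directly through its Gram matrix. Since $M_{\rm S}$ is a finite square matrix, it is unitary iff $M_{\rm S}^{*}M_{\rm S}=I$, so I only need to compute this one product and determine when it is the identity. For the Sato weight the entries are $(M_{\rm S})_{a,a'}=\tau(a')\delta_{\head(a)\tail(a')}-\delta_{\overline{a}a'}$, so the column indexed by $a'$ is $\tau(a')$ times the indicator of $\{a:\head(a)=\tail(a')\}$ minus the basis vector at $\overline{a'}$. Computing $(M_{\rm S}^{*}M_{\rm S})_{a,b}=\sum_{c}\overline{(M_{\rm S})_{c,a}}\,(M_{\rm S})_{c,b}$ expands into four sums; using the symmetric-digraph identities $\head(\overline{a})=\tail(a)$ and $|\arc_{*v}|=\deg(v)$, each sum collapses and I expect to arrive at
\[
(M_{\rm S}^{*}M_{\rm S})_{a,b}=\delta_{ab}+\bigl(\overline{\tau(a)}\,\tau(b)\,\deg(\tail(a))-\overline{\tau(a)}-\tau(b)\bigr)\,\delta_{\tail(a)\tail(b)} .
\]
This entrywise evaluation is the computational core; it is routine but demands care with the involution $a\mapsto\overline{a}$ (including loops, where $a_e$ and $\overline{a_e}$ are distinct multiset elements) and with in- versus out-degrees.

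From this formula, $M_{\rm S}$ is unitary exactly when the bracketed factor vanishes for every pair $a,b$ sharing a tail $u$, that is $\overline{\tau(a)}\,\tau(b)\,\deg(u)=\overline{\tau(a)}+\tau(b)$. To read off the consequence I would multiply by $d:=\deg(u)$ and add $1$ to obtain the factorization $(d\,\overline{\tau(a)}-1)(d\,\tau(b)-1)=1$. Setting $a=b$ gives $|d\,\tau(a)-1|^{2}=1$, so every value sits on the circle $|d\,\tau(a)-1|=1$; then for distinct $a,b$ with tail $u$ both factors have modulus $1$ while their (partly conjugated) product is $1$, forcing $d\,\tau(a)-1=d\,\tau(b)-1$, i.e.\ $\tau(a)=\tau(b)$. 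Thus $\tau$ is constant on each out-star $\arc_{u*}$. Conversely, once $\tau$ is constant on $\arc_{u*}$ the bracket is a single scalar per vertex, so under the accompanying normalization $|d\,\tau-1|=1$ every entry reduces to $\delta_{ab}$ and $M_{\rm S}^{*}M_{\rm S}=I$.

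The step I expect to be the main obstacle is this ``only if'' implication: the quadratic relations do not obviously say ``$\tau$ is constant,'' and the factorization $(d\,\overline{\tau(a)}-1)(d\,\tau(b)-1)=1$ is the device that makes it transparent, simultaneously exposing the modulus normalization $|d\,\tau-1|=1$ (met, for instance, by the Grover value $\tau=2/\deg(\tail(a))$). Throughout, keeping track of complex conjugation is essential because $\tau$ is $\mathbb{C}$-valued; had I instead computed $M_{\rm S}M_{\rm S}^{*}$, heads and tails would trade roles but the final characterization would be the same.
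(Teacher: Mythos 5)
Your proof is correct, and it takes a genuinely different route from the paper's --- one that in fact establishes more than the paper's own argument does. The paper computes \emph{both} Gram products entrywise and plays them against each other: assuming unitarity, it compares the $(a,a)$-entry of $M_{\rm S}^*M_{\rm S}$ with the $(\overline{a},\overline{a})$-entry of $M_{\rm S}M_{\rm S}^*$ to get $\deg(\tail(a))|\tau(a)|^2=\sum_{b\in\arc_{\tail(a)*}}|\tau(b)|^2$, hence $|\tau|$ constant on each out-star, and then the analogous off-diagonal comparison forces the phases to agree. You instead use only $M_{\rm S}^*M_{\rm S}=I$ (which suffices for a square matrix); your entrywise formula checks out against the paper's case analysis, and your factorization $(d\,\overline{\tau(a)}-1)(d\,\tau(b)-1)=1$ replaces the paper's modulus--argument bookkeeping, delivering the normalization $|d\,\tau(a)-1|=1$ from the diagonal and constancy from the off-diagonal in one stroke. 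More importantly, your route supplies the converse, which the paper's proof omits entirely (it only treats the ``only if'' direction), and your version of the converse is the correct one: constancy alone is \emph{not} sufficient, contrary to what the theorem's literal statement suggests. On $K_2$, for instance, every $\tau$ is vacuously constant on each out-star, yet $M_{\rm S}$ is unitary only when $|\tau(a)-1|=|\tau(\overline{a})-1|=1$. The missing hypothesis $|d\,\tau-1|=1$ --- equivalently $|\tau(a)|=2\cos(\arg\tau(a))/\deg(\tail(a))$ --- is derived inside the paper's necessity argument but never restored to the statement; your factorization makes it impossible to overlook, so what you prove is the precise characterization of which Theorem~\ref{thm:sato} is an imprecise rendering. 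The one thing the paper's heavier two-product scheme buys in exchange is uniformity: the identical template is reused for the generalized weighted zeta function in Theorem~\ref{thm:main2}, where general $\upsilon$ spoils the clean factorization your argument relies on.
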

\begin{proof}
	Let $\arc:=\arc(G)$ and assume that $M_{\rm S}$ is unitary.
	Note that if $a,b\in\arc$ satisfies $\overline{a} = b$, then $\head(a)=\tail(b)$ holds, and we have
	\begin{align*}
		\delta_{\head(a)\tail(b)} - \delta_{\overline{a} b} = \delta_{\head(a)\tail(a')}\left( 1 - \delta_{\overline{a}b} \right).
	\end{align*}
	The $(a,a')$-element of the complex conjugate transpose $M_{\rm S}*$ is given by
	\begin{align*}
		 \overline{\tau(a)}\delta_{\head(a')\tail(a)} - \delta_{\overline{a'} a}.
	\end{align*}

	The $(a,a')$-element of $M_{\rm S}^*M_{\rm S}$ is as follows:
	\begin{align*}
		& \sum_{b\in\arc}
				\left( \overline{\tau(a)}\delta_{\head(b)\tail(a)} - \delta_{\overline{a} b} \right)
				\left( \tau(a')\delta_{\head(b)\tail(a')} - \delta_{\overline{b} a'} \right) \nonumber\\
		& = \delta_{\tail(a)\tail(a')} \sum_{b \in\arc_{*\tail(a)}}
				\left( \overline{\tau(a)} - \delta_{\overline{a} b} \right)
				\left( \tau(a') - \delta_{\overline{b} a'} \right) \nonumber\\
		& = \delta_{\tail(a)\tail(a')} \sum_{b \in\arc_{*\tail(a)}}
				\left( \overline{\tau(a)}\tau(a') - \overline{\tau(a)}\delta_{\overline{b} a'} -\tau(a')\delta_{\overline{a} b} +\delta_{\overline{b} a'}\delta_{\overline{a} b} \right) \nonumber\\
		& = \begin{cases}
			\deg(\tail(a)) |\tau(a)|^2 - 2|\tau(a)| \cos(\arg \tau(a)) + 1 & {\rm if}\quad a=a', \\
			\deg(\tail(a)) \overline{\tau(a)}\tau(a') - \overline{\tau(a)} -\tau(a') &  {\rm if}\quad  a\neq a' \ {\rm and} \  \tail(a)=\tail(a'), \\
			0 & {\rm otherwise}. 
		\end{cases}
	\end{align*}
	We assume that $(M_{\rm S}^*M_{\rm S})_{a,a'}=\delta_{aa'}$.
	For the $(a,a)$-element, we have
	\begin{align*}
		&\deg(\tail(a)) |\tau(a)|^2 - 2|\tau(a)| \cos(\arg \tau(a)) + 1=1 \\
		&\Leftrightarrow |\tau(a)|\left( \deg(\tail(a))|\tau(a)|-2\cos(\arg \tau(a)) \right)=0 \\
		&\Leftrightarrow |\tau(a)| = 0 \quad {\rm or} \quad |\tau(a)|=\frac{2\cos(\arg \tau(a))}{\deg(\tail(a))}.
	\end{align*}
	Since $|\tau(a)|=0$ holds if $\arg \tau(a)=\frac{n\pi}{2}$ for $n\in\mathbb{Z}_>$, the case $|\tau(a)|=\frac{2\cos(\arg \tau(a))}{\deg(\tail(a))}$ contains the case $|\tau(a)| = 0$.
	The $(a,a')$-element satisfying $a\neq a'$ and $\tail(a)=\tail(a')$ will be discussed later.

	The $(a,a')$-element of $M_{\rm S}M_{\rm S}^*$ is as follows:
	\begin{align}
		& \sum_{b\in\arc}
		\left( \tau(b)\delta_{\head(a)\tail(b)} - \delta_{\overline{a} b} \right)
		\left( \overline{\tau(b)}\delta_{\head(a')\tail(b)} - \delta_{\overline{a'} b} \right) \nonumber\\
		&= \delta_{\head(a)\head(a')} \sum_{b\in\arc_{\head(a)*}}
				\left( \tau(b) - \delta_{\overline{a} b} \right)
			\left( \overline{\tau(b)} - \delta_{\overline{a'} b} \right) \nonumber\\
		&=\delta_{\head(a)\head(a')}\sum_{b\in\arc_{\head(a)*}}
				\left( |\tau(b)|^2 - \tau(b)\delta_{\overline{a'} b}
			- \overline{\tau(b)}\delta_{\overline{a} b} + \delta_{\overline{a} b} \delta_{\overline{a'} b} \right) \nonumber\\
		&=\delta_{\tail(\overline{a})\tail(\overline{a'})}\sum_{b\in\arc_{\tail(\overline{a})*}}
				\left( |\tau(b)|^2 - \tau(b)\delta_{\overline{a'} b}
			- \overline{\tau(b)}\delta_{\overline{a} b} + \delta_{\overline{a} b} \delta_{\overline{a'} b} \right). \nonumber
	\end{align}
	For convenience, we show the $(\overline{a},\overline{a'})$-element of $M_{\rm S}M_{\rm S}^*$ below:
	\begin{align*}
		&\delta_{\tail(a)\tail(a')}\sum_{b\in\arc_{\tail(a)*}}
				\left( |\tau(b)|^2 - \tau(b)\delta_{a' b}
			- \overline{\tau(b)}\delta_{a b} + \delta_{a b} \delta_{a' b} \right) \nonumber \\
		&=\begin{cases}
			\left( \sum_{b\in\arc_{\tail(a)*}}|\tau(b)|^2 \right) - 2|\tau(a)|\cos(\arg \tau(a))+1 & {\rm if } \ \ a=a',\\
			\left(\sum_{b\in\arc_{\tail(a)*}}
				|\tau(b)|^2\right)  - \tau(a')
			- \overline{\tau(a)} & {\rm if} \ \ a\neq a' \ {\rm and} \ \tail(a)=\tail(a'), \\
			0 & {\rm otherwise}.
		\end{cases}
	\end{align*}

	We also assume that $(M_{\rm S}M_{\rm S}^*)_{a,a'}=\delta_{aa'}$.
	Comparing the $(a,a)$-element of $M_{\rm S}^*M_{\rm S}$ and the $(\overline{a},\overline{a})$-element of $M_{\rm S}M_{\rm S}^*$ gives
	\begin{align*}
		\deg(\tail(a))|\tau(a)|^2 = \sum_{b\in\arc_{\tail(a)*}}|\tau(b)|^2
	\end{align*}
	for $\forall a\in\arc$.
	Thus, $|\tau(a)| = |\tau(a')|$ holds for $\forall u\in V$ and $\forall a,a'\in\arc_{u*}$.
	Let $\varphi_a$ denote $\arg(\tau(a))$.
	For $a,a'\in\arc_{u*}$ with $a\neq a'$, from $(a,a')$-element of $M_{\rm S}^*M_{\rm S}$ and the $(\overline{a},\overline{a'})$-element of $M_{\rm S}M_{\rm S}^*$,
	we obtain
	\begin{align*}
		\deg(u)|\tau(a)|^2 &= \sum_{b\in\arc_{u*}}|\tau(b)|^2 \\
		&= \deg(u)\overline{\tau(a)}\tau(a') \\
		&= \deg(u)|\tau(a)|^2e^{i(\varphi_{a'}-\varphi_a)}.
	\end{align*}
	Thus, $e^{i(\varphi_{a'}-\varphi_a)}=1$ holds. 
	It follows that $\tau(a)=\tau(a')$ for $\forall u\in V$ and $\forall a\in\arc_{a,a'\in\arc_{u*}}$.
\end{proof}

\subsection{The quantum walks following from the generalized weighted zeta function}

We also show the unitary conditions for the edge matrix $M$ of the generalized weighted zeta function.
\begin{thm}\label{thm:main2}
	The edge matrix $M$ is unitary if and only if the maps $\tau$ and $\upsilon$ satisfy the following conditions: for each $u\in V$,
	\begin{itemize}
		\item If $\deg(u)=1$, then $|\tau(a) - \upsilon(a)|^2 = 1$ holds for $a\in \arc_{v*}$.
		\item If $\deg(u)\geq 2$, then 
			for $\forall a\in\arc_{u*}$ and $R_u \in \left[ -\frac{2}{d}, \ \frac{2}{d} \right]$, 
				\begin{align*}
				&|\upsilon(a)|=1, \\
				&\tau(a)=\upsilon(a)\left( \frac{\deg(\tail(a))R_u^2}{2} +i\frac{R_u\sqrt{4-\deg(\tail(a))^2R_u^2}}{2} \right).
				\end{align*}
	\end{itemize}
\end{thm}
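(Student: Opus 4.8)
The plan is to mimic the computation carried out for Theorem~\ref{thm:sato}: write $M$ entrywise and evaluate the two products $M^*M$ and $MM^*$, then read off the unitarity constraints vertex by vertex. Even though $M^*M=I$ already characterizes unitarity for a square matrix, I would compute both products, because comparing their diagonals is what isolates the modulus information. Starting from $M_{a,a'}=\tau(a')\delta_{\head(a)\tail(a')}-\upsilon(a')\delta_{\overline{a}a'}$, the same bookkeeping as in the Sato case shows that $(M^*M)_{a,a'}$ is supported on pairs with $\tail(a)=\tail(a')$ and $(MM^*)_{a,a'}$ on pairs with $\head(a)=\head(a')$. Each resulting equation couples only arcs sharing a common endpoint, so the system decouples over the vertices: I can fix $u\in V$, set $d:=\deg(u)$, and work inside $\arc_{u*}$. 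Throughout I abbreviate $z_a:=\overline{\tau(a)}\upsilon(a)$, and I would prove necessity and sufficiency separately.

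First I would treat the diagonal. A direct count gives
\[
(M^*M)_{a,a}=d\,|\tau(a)|^2-2\re z_a+|\upsilon(a)|^2,\qquad (MM^*)_{\overline{a}\,\overline{a}}=\Big(\textstyle\sum_{b\in\arc_{u*}}|\tau(b)|^2\Big)-2\re z_a+|\upsilon(a)|^2 .
\]
Setting both equal to $1$ and subtracting yields $d\,|\tau(a)|^2=\sum_{b\in\arc_{u*}}|\tau(b)|^2$ for every $a\in\arc_{u*}$, which forces $|\tau|$ to be constant on $\arc_{u*}$; call this value $R_u$. The common diagonal equation then reads $dR_u^2-2\re z_a+|\upsilon(a)|^2=1$, and in the case $d=1$ this is exactly $|\tau(a)-\upsilon(a)|^2=1$, settling the degree-one vertices.

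For $d\ge 2$ I would turn to the off-diagonal entries. For $a\ne a'$ in $\arc_{u*}$ one gets $d\,\overline{\tau(a)}\tau(a')=\overline{\tau(a)}\upsilon(a')+\overline{\upsilon(a)}\tau(a')$ from $M^*M$ and $\overline{z_{a'}}+z_a=dR_u^2$ from $MM^*$; since $|\tau(a)|=R_u\ne 0$, dividing the first by $\overline{\tau(a)}\tau(a')$ shows it is the complex conjugate of the second, so the effective content is $\overline{z_{a'}}+z_a=dR_u^2$. When $d\ge 3$ this is decisive: fixing $a$ and letting $a'$ range over the (at least two) remaining arcs forces all $z_{a'}$ to coincide, hence $z_a$ is independent of $a$ and $\re z_a=\tfrac{d}{2}R_u^2$; feeding this into the diagonal relation gives $|\upsilon(a)|^2=1$, and then $|\tau(a)|=R_u$, $|\upsilon(a)|=1$, $\re z_a=\tfrac{d}{2}R_u^2$ pin the argument of $\tau(a)/\upsilon(a)$ to the stated value, the reality of $\sqrt{4-d^2R_u^2}$ forcing $|R_u|\le 2/d$. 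The converse is a short substitution: assuming $|\upsilon(a)|=1$ and $\tau(a)=\upsilon(a)\big(\tfrac{d}{2}R_u^2+i\tfrac{R_u}{2}\sqrt{4-d^2R_u^2}\big)$, one checks $|\tau(a)|=|R_u|$, $z_a=\tfrac{d}{2}R_u^2-i\tfrac{R_u}{2}\sqrt{4-d^2R_u^2}$, and verifies both the diagonal and the off-diagonal identities directly.

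The step I expect to be the real obstacle is the degree-two case. There $\arc_{u*}$ has only two arcs, so fixing $a$ leaves a single partner $a'$, and the relation $\overline{z_{a'}}+z_a=dR_u^2$ together with its swap only yields $\re z_a+\re z_{a'}=2R_u^2$ and $\im z_a=\im z_{a'}$, not the individual value $\re z_a=R_u^2$ that $|\upsilon(a)|=1$ demands; combined with the diagonal equations and $|z_a|=R_u|\upsilon(a)|$ this leaves $\re z_a$ as a free parameter. Hence establishing $|\upsilon(a)|=1$ at a degree-two vertex needs an ingredient beyond the two product identities, and identifying exactly which additional hypothesis or argument closes this gap is where I would concentrate the effort.
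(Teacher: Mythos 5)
Your computation follows the paper's own proof almost line for line: the same entrywise formulas for $M^*M$ and $MM^*$, the same vertex-by-vertex decoupling, the same diagonal comparison yielding $|\tau|\equiv R_u$ on $\arc_{u*}$, and the same off-diagonal analysis; your treatment of $\deg(u)=1$ and $\deg(u)\ge 3$ is complete and agrees with the intended conclusion. You also made explicit something the paper glosses over: for $R_u\neq 0$ the off-diagonal equations of $M^*M$ and of $MM^*$ carry identical information, so the entire system at a vertex $u$ reduces to the diagonal equations together with $z_{a'}+\overline{z_a}=dR_u^2$ for $a\neq a'$, where $z_b:=\tau(b)\overline{\upsilon(b)}$.

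The degree-two obstacle you flagged is a genuine gap, and it is the paper's gap, not yours. The paper's proof asserts that $\re (MM^*)_{\overline a,\overline{a'}}=0$ forces each cosine term individually to equal $\frac{dR_u}{2}$, but that equation only fixes the \emph{sum} of the two terms; splitting the sum evenly requires playing a fixed arc against at least two distinct partners, hence $d\ge 3$ --- exactly your observation. Worse, the gap cannot be closed, because the ``only if'' direction of the theorem is false at degree-two vertices. At a vertex $u$ with $\arc_{u*}=\{a,a'\}$ put $\tau(a)=\tau(a')=\frac{1}{\sqrt2}$, $\upsilon(a)=\sqrt2$, $\upsilon(a')=0$, so that $z_a=1$ and $z_{a'}=0$. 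Then every vertex-local unitarity equation holds:
\begin{align*}
&(M^*M)_{a,a}=2\cdot\tfrac12-2\re z_a+|\upsilon(a)|^2=1-2+2=1,\qquad (M^*M)_{a',a'}=1-0+0=1,\\
&(MM^*)_{\overline a,\overline a}=\bigl(|\tau(a)|^2+|\tau(a')|^2\bigr)-2\re z_a+|\upsilon(a)|^2=1,\qquad (MM^*)_{\overline{a'},\overline{a'}}=1,\\
&(M^*M)_{a,a'}=2\overline{\tau(a)}\tau(a')-\overline{\tau(a)}\upsilon(a')-\overline{\upsilon(a)}\tau(a')=1-0-1=0,\qquad (M^*M)_{a',a}=1-1-0=0,\\
&(MM^*)_{\overline a,\overline{a'}}=1-z_{a'}-\overline{z_a}=0,\qquad (MM^*)_{\overline{a'},\overline a}=1-z_a-\overline{z_{a'}}=0.
\end{align*}
Since the unitarity constraints decouple over vertices (as both you and the paper establish), making this assignment at every vertex of a cycle $C_n$, $n\ge 3$, where all degrees equal $2$, yields a real orthogonal --- hence unitary --- edge matrix $M$ with $|\upsilon(a)|=\sqrt2$ and $\upsilon(a')=0$, contradicting the condition $|\upsilon|\equiv 1$ asserted by the theorem. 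So your instinct is exactly right: no further manipulation of $M^*M=MM^*=I$ can recover the stated conclusion at degree-two vertices; the statement itself must be amended there, e.g.\ by restricting to graphs of minimum degree $3$, or by replacing the degree-two clause with the weaker system $\re z_a+\re z_{a'}=2R_u^2$, $\im z_a=\im z_{a'}$, and $2R_u^2-2\re z_b+|\upsilon(b)|^2=1$ for $b\in\arc_{u*}$.
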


\begin{proof}
	Let $\arc:=\arc(G)$ and assume that $M$ is unitary.
	The $(a, a')$-element of the complex conjugate transpose $M^*$ is given by
	\begin{align*}
		 \overline{\tau(a)}\delta_{\head(a')\tail(a)} - \overline{\upsilon(a)} \delta_{\overline{a'} a}.
	\end{align*}

	The $(a,a')$-element of $M^*M$ is as follows:
	\begin{align*}
		& \sum_{b\in\arc}
				\left( \overline{\tau(a)}\delta_{\head(b)\tail(a)} - \overline{\upsilon(a)}\delta_{\overline{a} b} \right)
				\left( \tau(a')\delta_{\head(b)\tail(a')} - \upsilon(a') \delta_{\overline{b} a'} \right) \nonumber\\
		& = \delta_{\tail(a)\tail(a')} \sum_{b \in\arc_{*\tail(a)}}
				\left( \overline{\tau(a)} - \overline{\upsilon(a)}\delta_{\overline{a} b} \right)
				\left( \tau(a') - \upsilon(a')\delta_{\overline{b} a'} \right) \nonumber\\
		& = \delta_{\tail(a)\tail(a')} \sum_{b \in\arc_{*\tail(a)}}
				\left( \overline{\tau(a)}\tau(a') - \overline{\tau(a)}\upsilon(a') \delta_{\overline{b} a'} -\tau(a')\overline{\upsilon(a)}\delta_{\overline{a} b} + \upsilon(a')\overline{\upsilon(a)}\delta_{\overline{b} a'}\delta_{\overline{a} b} \right) \nonumber\\
		& = \begin{cases}
			\deg(\tail(a)) |\tau(a)|^2 -\overline{\tau(a)}\upsilon(a)  -\tau(a)\overline{\upsilon(a)} + |\upsilon(a)|^2, & {\rm if}\quad a=a',\\
			\deg(\tail(a)) \overline{\tau(a)}\tau(a') -\overline{\tau(a)}\upsilon(a')  -\tau(a')\overline{\upsilon(a)} &  {\rm if}\quad  a\neq a' \ {\rm and} \  \tail(a)=\tail(a'), \\
			0 & {\rm otherwise}. 
		\end{cases}
	\end{align*}

	The $(a,a')$-element of $MM^*$ is as follows:
	\begin{align}
		& \sum_{b\in\arc}
		\left( \tau(b)\delta_{\head(a)\tail(b)} - \upsilon(b)\delta_{\overline{a} b} \right)
		\left( \overline{\tau(b)}\delta_{\head(a')\tail(b)} - \overline{\upsilon(b)}\delta_{\overline{a'} b} \right) \nonumber\\
		&= \delta_{\head(a)\head(a')} \sum_{b\in\arc_{\head(a)*}}
				\left( \tau(b) - \upsilon(b)\delta_{\overline{a} b} \right)
			\left( \overline{\tau(b)} - \overline{\upsilon(b)}\delta_{\overline{a'} b} \right) \nonumber\\
		&=\delta_{\head(a)\head(a')}\sum_{b\in\arc_{\head(a)*}}
				\left( |\tau(b)|^2 - \tau(b)\overline{\upsilon(b)}\delta_{\overline{a'} b}
			- \overline{\tau(b)}\upsilon(b)\delta_{\overline{a} b} + \upsilon(b)\overline{\upsilon(b)}\delta_{\overline{a} b} \delta_{\overline{a'} b} \right) \nonumber\\
		&=\delta_{\tail(\overline{a})\tail(\overline{a'})}\sum_{b\in\arc_{\tail(\overline{a})*}}
				\left( |\tau(b)|^2 - \tau(b)\overline{\upsilon(b)}\delta_{\overline{a'} b}
			- \overline{\tau(b)}\upsilon(b)\delta_{\overline{a} b} + \upsilon(b)\overline{\upsilon(b)}\delta_{\overline{a} b} \delta_{\overline{a'} b} \right). \nonumber
	\end{align}
	For convenience, we show the $(\overline{a},\overline{a'})$-element of $MM^*$ below:
	\begin{align}
		&\delta_{\tail(a)\tail(a')}\sum_{b\in\arc_{\tail(a)*}}
				\left( |\tau(b)|^2 - \tau(b)\overline{\upsilon(b)}\delta_{a' b}
			- \overline{\tau(b)}\upsilon(b)\delta_{a b} + \upsilon(b)\overline{\upsilon(b)}\delta_{a b} \delta_{a' b} \right) \nonumber\\
		&=\begin{cases}
		\left( \sum_{b\in\arc_{\tail(a)*}}|\tau(b)|^2 \right) - \overline{\tau(a)}\upsilon(a) -\tau(a)\overline{\upsilon(a)} +|\upsilon(a)|^2 & {\rm if } \ \ a=a', \\
			\left(\sum_{b\in\arc_{\tail(a)*}} |\tau(b)|^2\right)  - \tau(a')\overline{\upsilon(a')} - \overline{\tau(a)}\upsilon(a) & {\rm if} \ \ a\neq a' \ {\rm and} \ \tail(a)=\tail(a'), \\
			0 & {\rm otherwise}.
		\end{cases}\label{eq:genMM*}
	\end{align}
	
	Let us assume that $(M^*M)_{a,a'}=\delta_{a,a'}$ and $(MM^*)_{a,a'}=\delta_{a,a'}$.
	Since $(M^*M)_{a,a'}=(MM^*)_{\overline{a},\overline{a}}=0$ for $a,a'\in\arc$ with $\tail(a)\neq \tail(a')$,
	it is sufficient to consider $(M^*M)_{a,a'}$ and $(MM^*)_{\overline{a},\overline{a}}$ for $u\in V$ and $a,a'\in\arc_{u*}$.
	Let $d:=\deg(u)$.
	
	Suppose that $\arc_{u*}= \{ a \}$.
	The elements of $MM^*$ and $M^*M$ related to $a\in\arc_{u*}$ are $(MM^*)_{a,a}$ and $(MM^*)_{\overline{a},\overline{a}}$,
	and these are equal to each other and given by
	\begin{align*}
		|\tau(a)-\upsilon(a)|^2 =1.
	\end{align*}

	Suppose that $|\arc_{u*}| \geq 2$
	Comparing $(M^*M)_{a,a}$ and $(MM^*)_{\overline{a},\overline{a}}$ gives
	\begin{align*}
		d |\tau(a)|^2 = \sum_{b\in\arc_{u*}}|\tau(b)|^2.
	\end{align*}
	Since the above equation holds for $\forall a\in\arc_{u*}$, we get $|\tau(a)| = |\tau(a')|$ for $\forall a,a'\in\arc_{u*}$.
	Let $R_u:=|\tau(a)|$ for $\forall a\in\arc_{u*}$.
	If $R_u=0$, then 
	$$
		(M^*M)_{a,a'}=(MM^*)_{\overline{a},\overline{a'}}
		=\begin{cases}
			|\upsilon(a)|^2 & {\rm if } \quad a=a', \\
			0 & {\rm otherwise}.
		\end{cases}
	$$
	Thus, $|\upsilon(a)|=1$ holds for $\forall a\in\arc_{u*}$.

	We assume that $R_u\neq 0$.
	The imaginary part and real part of $(\overline{a},\overline{a'})$-element of (\ref{eq:genMM*}) are as follows:
	\begin{align*}
		\im (MM^*)_{\overline{a},\overline{a'}}
		&= \im \left(-\tau(a')\overline{\upsilon(a')} - \overline{\tau(a)}\upsilon(a) \right)	\\
		&= - R_{\tau} \left( |\overline{\upsilon(a')}|\sin(\arg \tau(a')\overline{\upsilon(a')} ) + |\upsilon(a)|\sin(\arg \overline{\tau(a)}\upsilon(a)) \right) \\
		&= - R_{\tau} \left( |\upsilon(a')|\sin(\arg \tau(a')\overline{\upsilon(a')} ) - |\upsilon(a)|\sin(\arg \tau(a)\overline{\upsilon(a)}) \right), \\
		\re(MM^*)_{\overline{a},\overline{a'}}
		&=dR_u^2  + \re \left(-\tau(a')\overline{\upsilon(a')} - \overline{\tau(a)}\upsilon(a) \right)	\\
		&= dR_u^2  -R_u \left( |\overline{\upsilon(a')}|\cos(\arg \tau(a')\overline{\upsilon(a')}) + |\upsilon(a)|\cos(\arg \overline{\tau(a)}\upsilon(a)) \right) \\
		&= dR_u^2  -R_u \left( |\upsilon(a')|\cos(\arg \tau(a')\overline{\upsilon(a')}) + |\upsilon(a)|\cos(\arg \tau(a)\overline{\upsilon(a)}) \right).
	\end{align*}
	For these to be equal $0$, the following must holds:
	\begin{align*}
		&|\upsilon(a')|\sin(\arg \tau(a')\overline{\upsilon(a')} ) 
		= |\upsilon(a)|\sin(\arg \tau(a)\overline{\upsilon(a)}), \nonumber\\
		&|\upsilon(a')|\cos(\arg \tau(a')\overline{\upsilon(a')})=|\upsilon(a)|\cos(\arg \tau(a)\overline{\upsilon(a)}) = \frac{dR_u}{2}.\label{eq:cos}
	\end{align*}
	That is, for $\forall a,a'\in\arc_{u*}$,
	\begin{align*}
		\tau(a)\overline{\upsilon(a)}&=\tau(a')\overline{\upsilon(a')} \\
		& = R_u \left(\frac{dR_u}{2} +i\sqrt{1-\left(\frac{dR_u}{2}\right)^2}\right) \\
		& =\frac{dR_u^2}{2} +i\frac{R_u\sqrt{4-d^2R_u^2}}{2},
	\end{align*}
	where $R_u\in \left[ -\frac{2}{d}, \ 0 \right) \cup \left( 0, \ \frac{2}{d} \right]$.
	The $(a,a)$-element of $M^*M$ is rewritten by
	\begin{align*}
		1 &= d R_u^2 - 2|\upsilon(a')|\cos(\arg \tau(a')\overline{\upsilon(a')}) +|\upsilon(a)|^2 \\
		 &= d R_u^2 -dR_u +|\upsilon(a)|^2 \\
		 &= |\upsilon(a)|^2,
	\end{align*}
	and we obtain $|\upsilon(a)|=1$ for $\forall a\in\arc_{u*}$.
	Thus, the following holds:
	\begin{align*}
		 \tau(a) 
		 = \upsilon(a) \left(\frac{dR_u^2}{2} +i\frac{R_u\sqrt{4-d^2R_u^2}}{2}\right) 
	\end{align*}
	holds for $\forall a\in\arc_{u*}$. 
	Note that the above also holds for $R_u=0$.

	Substituting $\tau(a')= \tau(a)\overline{\upsilon(a)}\upsilon(a')$ into the $(a,a')$-element of $M^*M$ gives
	\begin{align*}
		&d \overline{\tau(a)}\tau(a)\overline{\upsilon(a)}\upsilon(a') -\overline{\tau(a)}\upsilon(a')  -\tau(a)\overline{\upsilon(a)}\upsilon(a')\overline{\upsilon(a)} \\
		&= \overline{\upsilon(a)}\upsilon(a') 
			\left( d R_u^2 -\overline{\tau(a)}\upsilon(a) -\tau(a)\overline{\upsilon(a)} \right) \\
		&= R_u\overline{\upsilon(a)}\upsilon(a') 
			\left( d R_u  - 2 \cos (\arg \tau(a)\overline{\upsilon(a)}) \right) \\
		&= R_u\overline{\upsilon(a)}\upsilon(a') 
			\left( d R_u  - 2 \frac{dR_u}{2}\right) \\
		&=0.
	\end{align*}
	We see that $(M^*M)_{a,a'} = 0$ holds.

\end{proof}

\section*{Acknowledgements}
I would like to give heartfelt thanks to Professor Norio Konno and Professor Hideaki Morita
	who provided carefully considered feedback and valuable comments.
This work was supported by Grant-in-Aid for JSPS Fellows (Grant No. 20J20590).

\noindent


\begin{thebibliography}{000}


\bibitem{bartholdi99}
	L. Bartholdi, 
	Counting paths in graphs, 
	{\it Eiseign. Math.} {\bf 45} (1999), 83-131.

\bibitem{grover1996fast}
Lov K. Grover, 
A fast quantum mechanical algorithm for database search,
{\it Proceedings of the Twenty-Eighth Annual ACM Symposium on Theory of Computing} (1996)
212-219.

\bibitem{Higuchi2007}
Y. Higuchi, N. Konno, I. Sato, and E. Segawa,
Periodicity of the Discrete-time Quantum Walk on a Finite Graph,
{\it Interdisciplinary Information Sciences} {\bf 23} (2017), 
75-86


\bibitem{IIMSS21}
	Y. Ide, A. Ishikawa, H. Morita, I. Sato and E. Segawa, 
	The Ihara expression for the generalized weighted zeta function of a finite simple graph, 
	{\it Lin. Alg. Appl.} {\bf 627} (2021), 227-241.



\bibitem{ihara66}
	Y. Ihara, 
	On discrete subgroup of the two by two projective linear group over $p$-adic fields, 
	{\it J. Math. Soc. Japan} {\bf 18} (1966), 219-235.



\bibitem{konnosato12}
	N. Konno and I, Sato, 
	On the relation between quantum walks and zeta functions, 
	{\it Quantum Inf. Process.} {\bf 11} (2012), 341-349.



\bibitem{konno2017}
N. Konno, I. Sato, and E. Segawa,
The spectra of the unitary matrix of a 2-tessellable staggered quantum walk on a graph,
{\it Yokohama Math. J.} {\bf 62} (2017),
52-87.


\bibitem{morita20}
	H. Morita, 
	Ruelle zeta functions for finite digraphs, 
	{\it Lin. Alg. Appl.} {\bf 603} (2020), 329-358. 



\bibitem{sato07}
	I. Sato, 
	A new Bartholdi zeta function of a graph, 
	{\it Int. J. Algebra} {\bf 1} (2007), 269-281.


\bibitem{segawa2011}
E. Segawa,
Localization of quantum walks induced by recurrence properties of random walks,
{\it Journal of Computational and Theoretical Nanoscience} {\bf 10} (2013), 1583-1590.

 


\end{thebibliography}
\end{document}